\newcommand*\bQ{\mathbb{Q}} 
\newcommand*\bZ{\mathbb{Z}} 
\newcommand*\cO{\mathcal{O}} 
\newcommand*\OK{\cO_K}
\newcommand*\eqdef{\coloneqq} 
\newcommand*\defeq{\eqqcolon}
\DeclareMathOperator\Tr{Tr} 
\newcommand*\sqrtDelta{{\mathchoice{\textstyle\sqrt{\!\Delta}}{\sqrt{\!\Delta}}{\sqrt{\!\Delta}}{\sqrt{\!\Delta}}}}
\newcommand*\tc{{\mathrm{c}}}
\newcommand*\Pmod[1]{\mathclose{}\ (\operatorname{mod} #1)}
\newcommand*\defined[1]{\textsl{#1}}
\newcommand*\tight[1]{\mkern2mu{#1}\mkern2mu\relax}
\DeclarePairedDelimiter\qfl\lfloor\rfloor
\DeclarePairedDelimiter\qcl\lceil\rceil
\DeclarePairedDelimiter\abs\lvert\rvert
\DeclarePairedDelimiterX\set[2]\lbrace\rbrace{\,#1\mathclose{}:\mathopen{}#2\,}
\newtheorem{theorem}{Theorem}
\newtheorem{lemmaasthm}[theorem]{Lemma}
\newtheorem{propositionasthm}[theorem]{Proposition}
\newtheorem{corollaryasthm}[theorem]{Corollary}
\def\alphenumi{\renewcommand\theenumi{\alph{enumi}}}
\begin{document}

\author{Tom\' a\v s Hejda}
\address{Charles University, Faculty of Mathematics and Physics, Department of Algebra, So\-ko\-lov\-sk\' a~83, 18600 Praha~8, Czechia
\newline\indent
University of Chemistry and Technology, Prague, Department of Mathematics, Studentská~6, 16000 Praha~6, Czechia}
\email{tohecz@gmail.com}

\author{V\' \i t\v ezslav Kala}
\address{Charles University, Faculty of Mathematics and Physics, Department of Algebra, So\-ko\-lov\-sk\' a~83, 18600 Praha~8, Czechia}
\email{vita.kala@gmail.com}
\urladdr{https://sites.google.com/site/vitakala/}

\thanks{We acknowledge support by Czech Science Foundation (GA\v CR) grant 17-04703Y (TH,VK)
 and partial support by Charles University Research Centre program UNCE/SCI/022 (VK)}
\keywords{Real quadratic number field, semigroup of totally positive integers, continued fraction, additively indecomposable integer}
\subjclass[2010]{11R11, 11A55, 20M05, 20M14}

\title{Additive structure of totally positive quadratic~integers}

\begin{abstract}
Let $K=\bQ(\sqrt D)$ be a real quadratic field.
We consider the additive semigroup $\cO_K^+(+)$ of totally positive integers in $K$ and determine its generators (indecomposable integers) and relations; they can be nicely described in terms of the periodic continued fraction for $\sqrt D$. 
We also characterize all uniquely decomposable integers in $K$ and estimate their norms.
Using these results, we prove that the semigroup $\cO_K^+(+)$ completely determines the real quadratic field $K$.
\end{abstract}

\keywords{real quadratic number field, semigroup of totally positive integers, continued fraction, additively indecomposable integer}


\maketitle

\thispagestyle{empty}


\section{Introduction}

The additive semigroup of totally positive integers $\OK^+$ in a totally real number field $K$ has long played a fundamental role in algebraic number theory,
even though more attention has perhaps been paid to the multiplicative structure of the ring $\OK$, for example, to its units and unique factorization into primes.
The most prominent purely additive objects are the \defined{indecomposable elements}, i.e., totally positive integers $\alpha\in\OK^+$ that cannot be decomposed into a sum $\alpha=\beta+\gamma$ of totally positive integers $\beta, \gamma\in\OK^+$.
For example, in 1945, Siegel used them (under the name ``extremal elements'') to prove that if $K$ is a number field different from $\bQ$ and $\bQ(\sqrt 5)$, then there is a totally positive integer in $K$ that cannot be written as sum of any number of squares~\cite{siegel_1945_313}.

In the real quadratic case $K=\bQ(\sqrt D)$, indecomposables can be nicely characterized in terms of continued fraction (semi-)convergents to $\sqrt D$ (see~Section~\ref{sec:prelim}); Dress and Scharlau~\cite{dress_scharlau_1982}
proved an upper bound on the norm of each indecomposable $N(\alpha)\leq D$, which was recently refined by Jang, Kim, Tinkov\' a, Voutier, and the second author~\cite{jang_kim_2016,kala_2016_jnt,tinkova_voutier_2019}.
This stands in contrast to the situation of a general totally real field $K$, where it is much harder to describe indecomposables: Brunotte~\cite{brunotte_1983}
proved an upper bound on their norm in terms of the regulator, but otherwise their structure remains quite mysterious. 

The goal of this article is to study the structure of the whole additive semigroup $\OK^+(+)$.
This is an interesting problem in itself, but 
it seems also necessary for certain applications (such as the recent progress in the study of universal quadratic forms and lattices over $K$ by Kim, Blomer, Yatsyna, and the second author~\cite{kim_2000,blomer_kala_2015,blomer_kala_2018,kala_2016_bams,kala_yatsyna_2018,yatsyna_2019}).

In particular, as indecomposable elements are precisely the generators of $\OK^+(+)$, we need to determine the relations between them.
While the description of indecomposables in terms of the continued fraction is fairly straightforward, it is a priori not clear at all if the same will be the
case for relations, as there could be some ``random'' or ``accidental'' ones. 
Perhaps surprisingly, it turns out that this is not the case and that all the relations (given in Theorem~\ref{prop:two-indeco}) can be described quite elegantly.
A key tool in the proof is the fact that each totally positive integer can be uniquely written
 as a $\mathbb Z^+$-linear combination of two consecutive indecomposables. 
We also show in Corollary \ref{thm:pres} that this corresponds to a cancellative presentation of the semigroup $\OK^+(+)$.

One of course cannot hope to have an analogue of unique factorization in the additive setting, but nevertheless, \emph{some} elements can be uniquely decomposed as a sum of indecomposables. In Theorem~\ref{thm:ud} we characterize all such \defined{uniquely decomposable elements} and obtain again a very explicit result depending only on the continued fraction.
As another application, this then yields a direct proof of Theorem~\ref{thm:isom} that $\OK^+(+)$ (viewed as an abstract semigroup) completely determines $D$ and the number field~$K$.
Let us briefly remark that this result can be viewed alongside a number of beautiful results concerning the (im)possibility of reconstructing a number field from some of its invariants, such as the absolute Galois group, Dedekind zeta-function, or Dirichlet $L$-series, see, e.g.,~\cite{gassmann_1926,kubota_1957,neukirch_1969,uchida_1976,cornellissen_etal_2019}.
A natural question to ask is of course whether an analogue of our Theorem~\ref{thm:isom} holds also for totally real number fields of higher degree,
although this may be quite hard, as we lack a good understanding of indecomposable elements.

Finally, we use our results to estimate the norms of totally positive integers, and in particular, analogously to the results that norms of convergents and indecomposables are at most $2D^{1/2}$ and $D$, respectively,
 we show that the norm of a uniquely decomposable elements is at most of the order $D^{3/2}$ (Theorem~\ref{thm:norm}).

Note that structure theory of semigroups is a well-developed subject, e.g., see  
\cite{clifford_preston_1967,grillet_1995},
and the references therein. 
Among the fundamental topics in the area are
classification problem (characterize a given class of semigroups, e.g., by finding their presentations) and isomorphism problem (decide when two presentations give isomorphic semigroups): our Corollary~\ref{thm:pres} and Theorem~\ref{thm:isom} solve these problems for the semigroups $\OK^+(+)$ (which are not finitely generated).
Semigroups also have important applications to theoretical computer science, for example
in relation to the study of formal languages.
Quite similar to our semigroups $\OK^+(+)$, that can be viewed as subsets of $\mathbb Z^2$, are the important
linear and semilinear subsets of $\mathbb Z^n$~\cite[Ch.~5]{ginsburg_1966}). 
Also in number theory, numerical semigroups, i.e., subsemigroups of $\mathbb Z^+(+)$, are commonly studied, e.g.,
\cite{garcia_sanchez_rosales_1999} study their presentations. As a final example, let us mention that additive subsemigroups of $\mathbb Z^n$, 
that have similar geometric properties as $\OK^+$, were recently used to classify a certain class of semifields~\cite{kala_korbelar_2018}.

\section*{Acknowledgments}
We are grateful to the anonymous referee for pointing out that we were using the incorrect notion of semigroup presentation and for several other very useful comments that helped us improve the article.

\section{Preliminaries}\label{sec:prelim}

Throughout the work, we will use the following notation. We fix a squarefree integer $D\geq2$ and
consider the real quadratic field $K=\bQ(\sqrt D)$ and its ring of integers $\OK$;
we know that $\{1,\omega_D\}$ forms an integral basis of $\OK$, where
\[
	\omega_D\eqdef \begin{cases}
		\sqrt D & \text{if }\textstyle D\equiv 2,3\Pmod 4
	,\\
		\frac{1+\sqrt D}{2} & \text{if }D\equiv 1\Pmod 4
	.\end{cases}
\]
By $\Delta$ we denote the discriminant of $K$, i.e., $\Delta=4D$ if $D\equiv 2, 3\Pmod 4$ and $\Delta=D$ otherwise. 
The norm and trace from $K$ to $\bQ$ are denoted by $N$ and $\Tr$, respectively.

An algebraic integer $\alpha\in\OK$ is \defined{totally positive} iff $\alpha>0$ and $\alpha'>0$,
 where $\alpha'$ is the Galois conjugate of $\alpha$, we write this fact as $\alpha\succ0$; for $\alpha,\beta\in\OK$ we denote by $\alpha\succ \beta$ the fact that $\alpha-\beta\succ0$, 
and by $\OK^+$ the set of all totally positive integers.
We say that $\alpha\in\OK^+$ is \defined{indecomposable} iff it can not be written
 as a sum of two totally positive integers or equivalently iff there is no algebraic integer $\beta\in\cO_K^+$ such that $\alpha\succ\beta$.
We say that $\alpha\in\OK^+$ is \defined{uniquely decomposable}
 iff there is a unique way how to express it as a sum of indecomposable elements.

It will be slightly more convenient for us to work with a purely periodic continued fraction, and so
let
$\sigma_D=[\overline{u_0, u_1, \dots, u_{s-1}}]$ be the periodic continued fraction expansion of
\[
	\sigma_D
	\eqdef \omega_D + \qfl{-\omega_D'}
	= \begin{cases}
		\sqrt D + \qfl{\sqrt D} & \text{if }D\equiv 2,3\Pmod 4
	,\\
		\frac{1+\sqrt D}{2} + \qfl[\big]{\frac{-1+\sqrt D}{2}} & \text{if }D\equiv 1\Pmod 4
	\end{cases}
\]
 (with positive integers $u_i$).
We then have that $\omega_D = \bigl[\qcl{u_0/2}, \overline{u_1, \dots, u_{s}}\bigr]$.
It is well known that $u_1, u_2, \dots, u_{s-1}$ is a palindrome
 and that $u_0=u_s$ is even if and only if $D\equiv 2,3\bmod4$,
 hence $\qcl{u_0/2}= (u_s + \Tr(\omega_D))/2$~\cite[\S\,24.II]{perron_1913}.

Denote the convergents to $\omega_D$ by $p_i/q_i\eqdef\bigl[\qcl{u_0/2}, u_1, \dots, u_i\bigr]$ and
recall that the sequences $(p_i)$, $(q_i)$ satisfy the recurrence
\begin{equation}\label{eq:recurrence}
	X_{i+2} = u_{i+2} X_{i+1} + X_i
	\quad\text{for}\quad
	i\geq-1
\end{equation}
 with the initial condition $q_{-1}=0$, $p_{-1}=q_0=1$, and $p_0=\qcl{u_0/2}$~\cite[\S\,1]{perron_1913}.
Denote $\alpha_i\eqdef p_i-q_i\omega_D'$ and $\alpha_{i,r} = \alpha_i+r\alpha_{i+1}$.
Then we have the following classical facts (see, e.g.,~\cite{dress_scharlau_1982}):
\begin{itemize}
\item The sequence $(\alpha_i)$ satisfies the recurrence~\eqref{eq:recurrence}.
\item We have that $\alpha_i\succ 0$ if and only if $i\geq -1$ is odd.
\item The indecomposable elements in $\OK^+$ are $\alpha_{i,r}$ with odd $i\geq-1$ and $0\leq r\leq u_{i+2}-1$,
 together with their conjugates.
\item We have that $\alpha_{i,u_{i+2}} = \alpha_{i+2,0}$.
\item The indecomposables $\alpha_{i,r}$ are increasing with increasing $(i,r)$ (in the lexicographic sense).
\item The indecomposables $\alpha_{i,r}'$ are decreasing with increasing $(i,r)$.
\end{itemize}

We also denote $\varepsilon>1$ the fundamental unit of $\OK$; we have that $\varepsilon=\alpha_{s-1}$.
Furthermore, we denote $\varepsilon^+>1$ the smallest totally positive unit $>1$; we
 have that $\varepsilon^+=\varepsilon$ if $s$ is even and
 $\varepsilon^+=\varepsilon^2=\alpha_{2s-1}$ if $s$ is odd.
Furthermore, we denote $\gamma_0=\omega_D$ and $\gamma_i = [u_i, u_{i+1}, u_{i+2}, \dotsc]$ for $i\geq1$;
 we have that $u_i<\gamma_i = u_i + \frac{1}{\gamma_{i+1}}<u_i+1$ for $i\geq1$.
 
In the final Section~\ref{sec:norm} we will estimate norms of totally positive integers
 and particularly uniquely decomposable integers.
To this end, we will use the following additional notation:
For a convergent $\alpha_i$, we set
\[
	N_i \eqdef \abs{N(\alpha_i)}
	= (-1)^{i+1}N(\alpha_i)
	= \begin{cases}
		\abs{p_i^2-Dq_i^2} & \text{if }D \equiv 2, 3 \Pmod{4}
	,\\
		\abs{p_i^2-p_iq_i-q_i^2\frac{D-1}{4}} & \text{if }D\equiv 1\Pmod 4
	. \end{cases}
\]
Recall that we have $p_{i+1}q_i-p_iq_{i+1}=(-1)^i$ and let us define $T_i$ so that $\alpha_i\alpha_{i+1}'=T_{i+1}+(-1)^i\omega_D$. This means that we have $T_i=p_i(p_{i-1}-q_{i-1})-q_iq_{i-1}\frac{D-1}4$ or
$T_i=p_ip_{i-1}-Dq_iq_{i-1}$ when $D\equiv 1\pmod 4$ or $2, 3\pmod 4$, respectively.

\section{Relations in the semigroup \texorpdfstring{$\OK^+(+)$}{OK+(+)}}

In this section we will consider all the (additive) relations that hold in the semigroup $\OK^+(+)$. We will show in Theorem~\ref{prop:two-indeco} that they all follow from certain basic relations, and that this amounts to giving a presentation of $\OK^+(+)$ as a commutative, cancellative semigroup (Corollary \ref{thm:pres}).

Let $$\mathcal{A}\eqdef \set{ \alpha_{i,r} }{ i\geq -1\text{ odd and } 0\leq r\leq u_{i+2}-1 }\backslash \{1\}$$ denote the set of indecomposable elements $>1$ and
$\mathcal{A}' \eqdef \set{ y' }{ y\in\mathcal{A} }$ the set of their conjugates, that is, of indecomposables $<1$. Note that each totally positive integer is a finite sum of indecomposables
(for, by considering the trace $\Tr\colon K\rightarrow\bQ$ it is easy to see that there are no infinite descending chains in $\OK^+$ with respect to $\succ$); in other words, the indecomposables 
$\mathcal{A}\cup\mathcal{A}'\cup\{1\}$ generate the semigroup $\OK^+(+)$.

\begin{subequations}\label{eq:pres-Rx}
Let us now consider the following relations (in $\OK$) between the indecomposables:
\begin{gather}\label{eq:R1}
	\alpha_{i,r-1} - 2\alpha_{i,r} + \alpha_{i,r+1} = 0
	\quad
	\text{for odd $i\geq -1$ and $1\leq r\leq u_{i+2}-1$},
\\\label{eq:R2}
	\alpha_{i-2,u_i-1} - (u_{i+1}+2)\alpha_{i,0} + \alpha_{i,1} = 0
	\quad
	\text{for odd $i\geq1$},
\\\label{eq:R3}
	\text{same relations as in \eqref{eq:R1} and \eqref{eq:R2} after applying the automorphism $(')$}
,\\\label{eq:R4}
	\alpha_{-1,1}' - (u_0+2)\cdot 1 + \alpha_{-1,1} = 0
.\end{gather}
\end{subequations}

We will shortly see that these relations hold (in fact, it is a straightforward verification from the definitions),
but let us first introduce an alternative notation of the indecomposables for convenience.
We define $\beta_j$, $j\in\bZ$, by the condition that
 $\dotsb < \beta_{-3} < \beta_{-2} < \beta_{-1} < \beta_0=1 < \beta_1 < \beta_2 < \beta_3 < \dotsb$
 is the increasing sequence of the indecomposables.
Note that we have $\beta_j'=\beta_{-j}$ for all $j\in\bZ$.

\begin{lemmaasthm}\label{lem:1v1}
For each $j\in\bZ$ we have that
\[
	v_j \beta_j = \beta_{j-1}+\beta_{j+1}
,\]
 where
\[
	v_j \eqdef \begin{cases}
		2
		& \text{if $\beta_{\abs j} = \alpha_{i,r}$ with odd $i\geq-1$ and $1\leq r\leq u_{i+2}-1$}
	,\\
		u_{i+1}+2
		& \text{if $\beta_{\abs j} = \alpha_{i,0}$ with odd $i\geq-1$}
	.\end{cases}
\]
\end{lemmaasthm}

\begin{proof}
As $\beta_{-j} = \beta_j'$, we can assume $j\geq0$.
We have $\beta_j = \alpha_{i,r}$ for some odd $i\geq-1$ and $0\leq r\leq u_{i+2}-1$.

If $1\leq r\leq u_{i+2}-1$, then $\beta_{j-1}=\alpha_{i,r-1}=\alpha_i+(r-1)\alpha_{i+1}$, $\beta_j=\alpha_{i,r}=\alpha_i+r\alpha_{i+1}$ and $\beta_{j+1}=\alpha_{i,r+1}=\alpha_i+(r+1)\alpha_{i+1}$
 form an arithmetic sequence, whence $\beta_j-\beta_{j-1}=\beta_{j+1}-\beta_j$, which is the statement.

If $r=0$ and $i\geq1$, we have a multiple of $\alpha_{i,0} = \alpha_i$ as the left-hand side
 and $\alpha_{i-2,u_i-1}+\alpha_{i,1}$ as the right-hand side.
We use the definition of $\alpha_{i,r}$ and the recurrence~\eqref{eq:recurrence} for $\alpha_j$ to see that
\begin{multline*}
\qquad
	\alpha_{i-2,u_i-1} + \alpha_{i,1}
	= \alpha_{i-2} + (u_i-1)\alpha_{i-1} + \alpha_i + \alpha_{i+1}
\\
	= \alpha_i - \alpha_{i-1} + \alpha_i + u_{i+1}\alpha_i + \alpha_{i-1}
	= (u_{i+1}+2)\alpha_i
	= (u_{i+1}+2)\alpha_{i,0}
.\qquad
\end{multline*}

Finally, consider $r=0$ and $i=-1$, i.e., the case $j=0$.
We have $\beta_1=\alpha_{-1,1}=\alpha_1+1=\qcl{u_0/2}-\omega'+1$, whence
\[
	\beta_1+\beta_{-1}
	= \Tr(\beta_1)
	= \Tr(\qcl{u_0/2}-\omega_D'+1)
	= 2\qcl{u_0/2} - \Tr(\omega_D) +2 = u_0 +2
.
\qedhere
\]
\end{proof}

Note that with the notation from Lemma~\ref{lem:1v1}, we can rewrite the
relations~\eqref{eq:R1}--\eqref{eq:R4}
in a unified way in terms of $\beta_j$ and $v_j$ as follows:
\begin{equation}\label{eq:pres-R}
	\beta_{j-1} - v_j \beta_j + \beta_{j+1} = 0
	\quad\text{for}\quad
	j\in\bZ
.\end{equation}

As our first main theorem,
we can now show that each totally positive integer is a linear combination of two consecutive indecomposables with non-negative integral coefficients.
A variant of this statement concerning sums of powers of units was used by Kim, Blomer, and the second
 author~\cite{kim_2000,blomer_kala_2018} in the construction of universal quadratic forms.

\begin{theorem}\label{prop:two-indeco}
Let $x\in\OK^+$ be given as a finite sum $x=\sum k_j \beta_j$
 with $k_j\in\bZ$.
Then there exist unique $j_0,e,f\in\bZ$ with $e\geq1$ and $f\geq0$ such that
 $x = e\beta_{j_0} + f\beta_{j_0+1}$.

Every relation of the form $\sum h_j\beta_j=0$ (with $h_j\in\bZ$ and only finitely many non-zeros) is
a $\bZ$-linear combination of the relations~\eqref{eq:pres-R}; in particular, this is true for
$e\beta_{j_0} + f\beta_{j_0+1} - \sum k_j \beta_j = 0$.
\end{theorem}

\begin{proof}
As the sequence $(\beta_j)_{j\in\bZ}$ is strictly increasing from $0$ to $+\infty$
 and the sequence $(\beta_j')_{j\in\bZ}$ is strictly decreasing from $+\infty$ to $0$,
 we know that the sequence $(\beta_j/\beta_j')_{j\in\bZ}$ is strictly increasing from $0$ to $+\infty$.
As $x\succ0$, we have that $x/x'>0$.
Hence we can fix the unique index $j_0$ such that
\begin{equation}\label{eq:two-indeco-frac}
	\frac{\beta_{j_0}}{\beta_{j_0}'} \leq \frac{x}{x'} < \frac{\beta_{j_0+1}}{\beta_{j_0+1}'}
.\end{equation}

We will first show that $x = \sum_{j=j_{\min}}^{j_{\max}} k_j \beta_j$
 can be rewritten to
\begin{equation}\label{eq:two-indeco-ef}
	x = e\beta_{j_0} + f\beta_{j_0+1}
	\quad\text{with}\quad
	e,f\in\bZ
\end{equation}
 using only relations~\eqref{eq:pres-R}.
We assume that $j_{\min}\leq j_0$ and $j_{\max}\geq j_0+1$
 (if not, we pad the sum by zeros to achieve this).
We use induction on the length of the sum $L=j_{\max}-j_{\min}+1\geq2$:
\begin{itemize}

\item Suppose $L=2$.
 Then $j_{\min}=j_0$ and $j_{\max}=j_0+1$.
 Putting $e = k_{j_0}$ and $f = k_{j_0+1}$ gives the statement.

\item Suppose $L\geq3$ and $j_{\min} \leq j_0-1$.
 Then
 \[
	x
	= (k_{j_{\min}+1} + v_{j_{\min}+1}k_{j_{\min}}) \beta_{j_{\min}+1}
	+ (k_{j_{\min}+2} - k_{j_{\min}}) \beta_{j_{\min}+2}
	+ \!\!\!\sum_{j=j_{\min}+3}^{j_{\max}}\!\!\! k_j \beta_j
 \]
  (here we used~\eqref{eq:pres-R} for $j=j_{\min}+1$) is a sum of the same form with length $L-1$.

\item Suppose $L\geq3$ and $j_{\min}=j_0$.
 Then $j_{\max}=j_0+L-1\geq j_0+2$ and
 \[
	x
	= \!\sum_{j=j_{\min}}^{j_{\max}-3}\! k_j \beta_j
	+ (k_{j_{\max}-2} - k_{j_{\max}}) \beta_{j_{\max}-2}
	+ (k_{j_{\max}-1} + v_{j_{\max}-1}k_{j_{\max}}) \beta_{j_{\max}-1}
 \]
  (here we used~\eqref{eq:pres-R} for $j=j_{\max}-1$) is a sum of the same form with length $L-1$.

\end{itemize}
This finishes the proof of
\eqref{eq:two-indeco-ef} (note that so far we have not used the property of $j_0$). 

We now plug~\eqref{eq:two-indeco-ef} into~\eqref{eq:two-indeco-frac} to derive that
\[
	0\leq f(\beta_{j_0+1}\beta_{j_0}'-\beta_{j_0}\beta_{j_0+1}')
	\quad\text{and}\quad
	0< e(\beta_{j_0+1}\beta_{j_0}'-\beta_{j_0}\beta_{j_0+1}')	
.\]
As $\beta_{j_0}'>\beta_{j_0+1}'>0$ and $\beta_{j_0+1}>\beta_{j_0}>0$, we have that
 $\beta_{j_0+1}\beta_{j_0}'-\beta_{j_0}\beta_{j_0+1}'>0$, hence $e>0$ and $f\geq0$.

To prove the uniqueness of $j_0$ from the statement of the proposition, suppose to the contrary that $j_1\neq j_0$ and $x=e_1 \beta_{j_1} + f_1 \beta_{j_1+1}$
 with $e_1>0$ and $f_1\geq0$.
We first consider the case when $j_1\leq j_0-1$, i.e.,
\[
	\frac{\beta_{j_1}}{\beta_{j_1}'}
	< \frac{\beta_{j_1+1}}{\beta_{j_1+1}'}
	\leq \frac{x}{x'}= \frac{e_1\beta_{j_1}+f_1\beta_{j_1+1}}{e_1\beta_{j_1}'+f_1\beta_{j_1+1}'}
.\]
Easy manipulation leads to $e_1(\beta_{j_1+1}\beta_{j_1}'-\beta_{j_1}\beta_{j_1+1}')\leq0$, hence $e_1\leq0$.
Likewise, in the case $j_1\geq j_0+1$ we get $f_1<0$.

Once $j_0$ is fixed, the uniqueness of $e,f$ follows from the linear independence of
 $\beta_{j_0},\beta_{j_0+1}$ over $\bQ$.
 
\bigskip
 
For the second part of the theorem, note that we have already proved this for $e\beta_{j_0} + f\beta_{j_0+1} - \sum k_j \beta_j = 0$
(as we deduced \eqref{eq:two-indeco-ef} only using \eqref{eq:pres-R}).

Consider now any relation between indecomposables $\sum h_j\beta_j = 0$
(where of course only finitely many $h_j$'s are non-zero).
Let us write this as two sums,
\begin{equation}\label{eq:pres-xpm}
\sum_{h_j>0} h_j\beta_j - \sum_{h_j<0} (-h_j)\beta_j = 0
,\quad\text{i.e.,}\quad
x\eqdef\sum_{h_j>0} h_j\beta_j = \sum_{h_j<0} (-h_j)\beta_j
.\end{equation}
Clearly $x\in\OK^+$.
By the first part of the theorem, $x$ is \emph{uniquely} written as $x=e\beta_j+f\beta_{j+1}$ for some $j\in\bZ$,
$e\geq1$, $f\geq0$; and we know that both $e\beta_j+f\beta_{j+1}=\sum_{h_j>0} h_j\beta_j$ and 
$e\beta_j+f\beta_{j+1}=\sum_{h_j<0} (-h_j)\beta_j$ follow from \eqref{eq:pres-R}.
Thus also the identity between the two right-hand sides follows from \eqref{eq:pres-R}.
\end{proof}
In order to show that Theorem \ref{prop:two-indeco} amounts to a certain presentation of the semigroup $\OK^+(+)$, let us first recall some relevant definitions (see \cite{grillet_1995} for more background and for any undefined notions). Note that we will use the additive notation, as we are interested in the additive semigroup $\OK^+(+)$, although in the literature it is perhaps more common to view semigroups multiplicatively.

Let $S(+)$ be a \emph{commutative semigroup}, i.e., a set $S$ together with a binary operation $+$ that is commutative and associative. $S$ is \emph{cancellative} iff 
$x+z=y+z$ implies $x=y$ (for all $x,y,z\in S$). In this case, let $G(S)=S-S$ be the \emph{universal} (or \emph{Grothendieck}) group of $S$, i.e., the group of (formal) differences $a-b$ for $a,b\in S$, where $a-b=c-d$ in $G(S)$ iff $a+d=b+c$ in $S$ (see \cite[Ch.~II, \S\S\,1,~2]{grillet_1995}). As $S$ is cancellative, we can view it as a subset of $G(S)$, i.e.,  $S\subset G(S)$.

Let us now introduce presentations (following, e.g., \cite[Ch.~I]{grillet_1995}). 
Let $X$ be a set and let $\mathcal{R}$ be a set of relations between the elements of $X$, i.e., formal expressions $x_1+\dots+x_u=y_1+\dots+y_v$ for $x_i, y_j\in X$.
Let $F_X(+)$ be the free commutative semigroup over the set of generators $X$ and let $\mathcal C$ be the congruence on $F_X$ generated by the relations $\mathcal{R}$ (see \cite[Ch.~1, Proposition~2.9]{grillet_1995}).
Then $\langle X \mid \mathcal{R}\rangle$ is a \emph{presentation} of a commutative semigroup $S(+)$ iff $S$ is isomorphic to the quotient $F_X/\mathcal C$ of $F_X$ by the congruence $\mathcal C$.
Note that $F_X/\mathcal C$ in general need not be cancellative (although $F_X$ is), and so, when considering cancellative semigroups, it is sometimes more convenient to consider cancellative presentations.

We say that $\langle X \mid \mathcal{R}\rangle_\tc$ is a \emph{cancellative presentation} of a cancellative, commutative semigroup $S(+)$ iff $S$ is isomorphic to the quotient $F_X/\mathcal D$, where $\mathcal D$ is the smallest cancellative congruence on $F_X$ that contains the relations $R$. The universal property of presentations implies that it is also isomorphic to 
the factor of $\langle X \mid \mathcal{R}\rangle$ by the smallest cancellative congruence (cf.~\cite[Ch.~II, Proposition~2.3]{grillet_1995}).

A little less formally, $\langle X \mid \mathcal{R}\rangle_\tc$ is a cancellative presentation a cancellative, commutative semigroup $S(+)$
if $S$ is generated by $X$ and all (additive) relations between elements of $X$ are generated by consecutively applying the relations in $\mathcal{R}$ and by using the cancellative law `$x+z=y+z$ implies $x=y$'.

\begin{corollaryasthm}\label{thm:pres}
	Let $X=\set{B_j}{j\in\bZ}$ and 
	\begin{equation}\label{eq:pres-T}
	\mathcal{T} \colon
	B_{j-1} + B_{j+1} = v_j B_j
	\quad\text{for}\quad
	j\in\bZ
	\end{equation}
	(where $v_j$ are as in Lemma \ref{lem:1v1}).
	Then $\langle X \mid \mathcal{T}\rangle_\tc$ is a cancellative presentation of the cancellative, commutative semigroup $\OK^+(+)$.
\end{corollaryasthm}

\begin{proof}
Let us consider the (semigroup) homomorphism $\varphi\colon F_X\rightarrow\OK^+$ defined by $\varphi(B_j)=\beta_j$.
	
The indecomposables $\beta_j$ are generators of $\OK^+$, and so $\varphi$ is surjective. Let $\mathcal E$ be the corresponding congruence on $F_X$, i.e., $a\mathcal E b$ iff $\varphi(a)=\varphi(b)$.
Hence we have an isomorphism of cancellative semigroups $\OK^+\simeq F_X/\mathcal E$ and $\mathcal E$ is a cancellative congruence.

Let further $\mathcal D$ be the smallest cancellative congruence on $F_X$ that contains the relations $\mathcal T$.
Since the relations (corresponding to) $\mathcal T$ hold for $\beta_j$ by Lemma \ref{lem:1v1}, we have $\mathcal T\subset \mathcal E$. Furthermore, as $\OK^+$ is a cancellative commutative semigroup, we have $\mathcal D\subset \mathcal E$. 
In order to show that $\langle X \mid \mathcal{T}\rangle_\tc$ is indeed cancellative presentation of $\OK^+(+)$, it remains to show that $\mathcal D=\mathcal E$. This is essentially clear from Theorem \ref{prop:two-indeco}, but we can also proceed formally as follows.

As in \cite[Ch.~II, Proposition~5.1]{grillet_1995}, let $R(\mathcal D)$ be the \emph{R\'edei group} of $\mathcal D$, i.e., $R(\mathcal D)$ is the subgroup
of $G(F_X)$ (which is clearly the free commutative group generated by~$X$) such that $R(\mathcal D)=\set{a-b}{a,b\in F_X, a\mathcal D b}$. Similarly let let $R(\mathcal E)$ be the R\'edei group of~$\mathcal E$. The R\'edei group uniquely determines the corresponding cancellative congruence, and so it suffices to show that $R(\mathcal D)=R(\mathcal E)$.

By the minimality of $\mathcal D$, we see that $R(\mathcal D)$ is the smallest subgroup of $G(F_X)$ that contains $B_{j-1}- v_j B_j + B_{j+1}$ for all $j$.
By definition, $R(\mathcal E)$ consists precisely of sums $\sum h_jB_j$ (with $h_j\in\bZ$) such that $\sum h_j\beta_j=0$ holds (in $\OK$). But by the second part of Theorem \ref{prop:two-indeco},
each such relation $\sum h_j\beta_j=0$ follows from the relations \eqref{eq:pres-R}. We have thus proved that $R(\mathcal D)=R(\mathcal E)$.
\end{proof}
Note that it is easy to see that the given set of relations~\eqref{eq:pres-T}  is minimal in the sense that none of them can be removed.

Let us also note that we indeed need to work with cancellative presentations. For example, $B_0+B_3=(v_1-1)B_1+(v_2-1)B_2$ holds in $\langle X \mid \mathcal{T}\rangle_\tc$, but does not hold in
$\langle X \mid \mathcal{T}\rangle$, as we cannot apply any of the relations \eqref{eq:pres-T} to this identity (without using cancellation).

\section{Uniquely decomposable elements}

We now characterize all the uniquely decomposable elements in $\OK^+$ using the results of the previous section.

\begin{theorem}\label{thm:ud}
All uniquely decomposable elements $x\in\OK^+$ are the following:
\begin{enumerate}\alphenumi
\item $\alpha_{i,r}$ with odd $i\geq-1$ and $0\leq r\leq u_{i+2}-1$;
\item $e\alpha_{i,0}$ with odd $i\geq -1$ and with $2\leq e\leq u_{i+1}+1$ 
\item $\alpha_{i,u_{i+2}-1} + f\alpha_{i+2,0}$ with odd $i\geq -1$ odd such that $u_{i+2}\geq2$ and with $1\leq f\leq u_{i+3}$;
\item $e\alpha_{i,0} + \alpha_{i,1}$ with odd $i\geq -1$ such that $u_{i+2}\geq2$ and with $1\leq e\leq u_{i+1}$;
\item $e\alpha_{i,0} + f\alpha_{i+2,0}$ with odd $i\geq -1$ such that $u_{i+2}=1$ and with
 $1\leq e\leq u_{i+1}+1$, $1\leq f\leq u_{i+3}+1$, $(e,f)\neq(u_{i+1}+1,u_{i+3}+1)$;
\item Galois conjugates of all of the above.
\end{enumerate}
\end{theorem}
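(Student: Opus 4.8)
The plan is to leverage Proposition~\ref{prop:two-indeco} systematically: every $x \in \OK^+$ is \emph{uniquely} of the form $e\beta_{j_0} + f\beta_{j_0+1}$ with $e \geq 1$, $f \geq 0$, so the question of unique decomposability becomes a question about how many ways the pair $(e\beta_{j_0}, f\beta_{j_0+1})$ can be re-expanded into indecomposables using only the relations~\eqref{eq:pres-R}. First I would reduce to $x \succ 0$ with $x/x' \leq 1$ (equivalently, up to Galois conjugation, $j_0 \geq 0$), handling conjugates by the final clause~(f). The decomposition $x = e\beta_{j_0} + f\beta_{j_0+1}$ is itself \emph{a} decomposition into indecomposables, so $x$ is uniquely decomposable if and only if this is the \emph{only} one.

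The core of the argument is a local analysis near the index $j_0$. If $x = \sum k_j \beta_j$ is any decomposition into indecomposables (all $k_j \geq 0$), then by the uniqueness part of Proposition~\ref{prop:two-indeco} the associated ``balanced'' form must collapse to $e\beta_{j_0} + f\beta_{j_0+1}$; so I would track exactly when the rewriting process of that proposition is \emph{reversible} — i.e., when one can push coefficient mass \emph{outward} from $\{j_0, j_0+1\}$ to neighboring indices while keeping all coefficients nonnegative. Using a relation $\beta_{j-1} - v_j\beta_j + \beta_{j+1} = 0$ to move mass onto $\beta_{j-1}$ and $\beta_{j+1}$ costs one unit on $\beta_j$ and $v_j - $ (mass already present) must stay nonnegative; so the combinatorics is entirely governed by the values $v_{j_0}, v_{j_0+1}, v_{j_0-1}, v_{j_0+2}$, which by Lemma~\ref{lem:1v1} are $2$ or $u_{\bullet+1}+2$ depending on whether the indecomposable sits in the ``interior'' of a block ($\alpha_{i,r}$, $1 \leq r \leq u_{i+2}-1$) or at a block boundary ($\alpha_{i,0}$). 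This is why the case list in the theorem splits according to whether $r = 0$ or $r = u_{i+2}-1$ and whether $u_{i+2} = 1$ (the block has length one, so boundary meets boundary).

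Concretely, I would organize the case analysis by the type of the pair $(\beta_{j_0}, \beta_{j_0+1})$: (i) both interior to the same block — then $x = e\alpha_{i,r} + f\alpha_{i,r+1}$, and collapsing $e,f$ to a single index is possible unless $e \leq 1$ essentially, giving case~(a) and the boundary refinements; (ii) $\beta_{j_0} = \alpha_{i,u_{i+2}-1}$, $\beta_{j_0+1} = \alpha_{i+2,0}$ — this crosses a block boundary and produces case~(c); (iii) $\beta_{j_0} = \alpha_{i,0}$ with $f = 0$, i.e. $x = e\alpha_{i,0}$, where the relation at $j_0$ can spill mass into the previous block only if $e$ is not too large, giving the bound $2 \leq e \leq u_{i+1}+1$ of case~(b) together with case~(d); and (iv) $u_{i+2} = 1$ so that $\alpha_{i,0}$ and $\alpha_{i+2,0}$ are adjacent, giving the exceptional two-parameter family~(e) with the single forbidden corner. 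In each case one must check \emph{both} directions: that every listed element genuinely has no second decomposition (no admissible outward move exists), and that every element \emph{not} on the list admits a second decomposition (exhibit an explicit alternative, typically by applying one relation~\eqref{eq:pres-R} once). I expect the main obstacle to be bookkeeping in case~(iv)/(e) and at the seams where a block of length one abuts a longer block, since there the relevant $v$-values are all ``large'' simultaneously and one has to rule out decompositions that spread across three or more blocks; a clean way to control this is to prove a monotonicity/support lemma stating that in any decomposition of $x$ the nonzero coefficients occupy a set of consecutive indices containing $j_0$, so that only finitely many local configurations need to be inspected.
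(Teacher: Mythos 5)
Your setup (reduce via Proposition~\ref{prop:two-indeco} to the canonical form $x=e\beta_{j_0}+f\beta_{j_0+1}$) and your treatment of the ``not on the list'' direction coincide with the paper: when $e\geq v_{j_0}$, $f\geq v_{j_0+1}$, or $(e,f)=(v_{j_0}-1,v_{j_0+1}-1)$, one application of a relation from \eqref{eq:pres-R} produces an explicit second decomposition with nonnegative coefficients. The gap is in the converse, i.e.\ showing that the listed elements admit \emph{no} other decomposition. Your criterion ``no admissible outward move exists'' is not sufficient as stated: a competing decomposition $x=\sum k_j\beta_j$ ($k_j\geq0$) is related to the canonical one by a $\bZ$-linear combination of relations, but the intermediate rewriting steps may pass through negative coefficients, so ruling out a single relation application that keeps everything nonnegative does not rule out all alternatives. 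Moreover, the ``support lemma'' you propose to control configurations spreading over several blocks is false in general: for $x=(v_j-1)\beta_j+(v_{j+1}-1)\beta_{j+1}$ one has the decomposition $x=\beta_{j-1}+\beta_{j+2}$, whose support is neither an interval nor contains $j_0=j$. Restricted to the elements actually on the list, the assertion that every decomposition is supported on $\{j_0,j_0+1\}$ is precisely the hard content of the theorem, and your outline gives no mechanism for proving it.

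The paper closes exactly this gap with a short size argument rather than combinatorics of rewriting: since $v_j\beta_j+v_{j+1}\beta_{j+1}=\beta_{j-1}+\beta_{j+2}$ (Lemma~\ref{lem:1v1}), the constraints $1\leq e\leq v_{j_0}-1$, $0\leq f\leq v_{j_0+1}-1$, $(e,f)\neq(v_{j_0}-1,v_{j_0+1}-1)$ give $x+\beta_{j_0}\preceq \beta_{j_0-1}+\beta_{j_0+2}$ or $x+\beta_{j_0+1}\preceq\beta_{j_0-1}+\beta_{j_0+2}$; comparing real embeddings this forces $x<\beta_{j_0+2}$, so no decomposition can contain any $\beta_i$ with $i\geq j_0+2$, and the conjugate inequality $x'<\beta_{j_0-1}'$ likewise excludes all $\beta_i$ with $i\leq j_0-1$. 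Linear independence of $\beta_{j_0},\beta_{j_0+1}$ over $\bQ$ then pins down the coefficients. If you want to salvage your plan, you should replace the proposed support lemma by this two-sided bound (or some equivalent quantitative statement); without it the ``finitely many local configurations'' step does not get off the ground.
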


Note that the first item lists all the indecomposables (which are clearly uniquely decomposable) and the second one comprises small multiples of all convergents $\alpha_i\succ 0$, including positive integers $1,2,\dots, u_0+1$.

\begin{proof}
The statement of the theorem, when rewritten using $\beta_j$ and $v_j$ (defined in Lemma~\ref{lem:1v1}), says that $x$
 is uniquely decomposable if and only if $x=e\beta_j+f\beta_{j+1}$ with
\begin{equation}\label{eq:klv}
	1\leq e\leq v_j-1
,\quad
	0\leq f\leq v_{j+1}-1
\quad\text{and}\quad
	(e,f)\neq  (v_j-1,v_{j+1}-1)
.\end{equation}

Let $x$ be uniquely decomposable. 
By Theorem~\ref{prop:two-indeco}, it can be written as $x=e\beta_j+f\beta_{j+1}$ for some $e,f,j\in\bZ$ with $e\geq 1, f\geq 0$.
Suppose now that $e,f$ do not satisfy~\eqref{eq:klv}.
This can happen only in one of the following ways:
\begin{itemize}

\item $e\geq v_j$.
 Then $x=e\beta_j+f\beta_{j+1} = \beta_{j-1} + (e-v_j)\beta_j + (f+1)\beta_{j+1}$ are two decompositions of $x$ (see Lemma~\ref{lem:1v1}).

\item $f\geq v_{j+1}$.
 Then $x=e\beta_j+f\beta_{j+1} = (e+1)\beta_j + (f-v_{j+1})\beta_{j+1} + \beta_{j+2}$ are two decompositions of $x$.

\item $(e,f) = (v_j-1,v_{j+1}-1)$.
 Then $x=(v_j-1)\beta_j+(v_{j+1}-1)\beta_{j+1} = \beta_{j-1} + \beta_{j+2}$ are two decompositions of $x$.

\end{itemize}

Conversely, let us now show that the condition~\eqref{eq:klv} is sufficient.
First note that~\eqref{eq:klv} implies
\begin{equation}\label{eq:klv-preceq}
	x+\beta_j\preceq v_j\beta_j+v_{j+1}\beta_{j+1}
\quad\text{or}\quad
	x+\beta_{j+1}\preceq v_j\beta_j+v_{j+1}\beta_{j+1}
.\end{equation}
Suppose now that $x=e\beta_j+f\beta_{j+1}$ with $e,f$ satisfying~\eqref{eq:klv} is not uniquely decomposable.
This can happen only in one of the following ways:
\begin{itemize}

\item $x$ has a decomposition that contains some $\beta_i$ 
 with $i\geq j+2$,
  i.e., $x\succeq\beta_i\geq\beta_{j+2}$.
 Simultaneously, we have from~\eqref{eq:klv-preceq} that
  $x+\beta_{j-1}\leq v_j\beta_j+v_{j+1}\beta_{j+1} = \beta_{j-1}+\beta_{j+2}$, whence $x<\beta_{j+2}$.
 This is a contradiction.

\item $x$ has a decomposition that contains some $\beta_i$ 
 with $i\leq j-1$,
  i.e., $x \succeq \beta_i$.
 Then $x' \geq \beta_i' \geq \beta_{j-1}'$.
 We also have from~\eqref{eq:klv-preceq} that
  $x' + \beta_{j+2}' \leq v_j\beta_j'+v_{j+1}\beta_{j+1}' = \beta_{j-1}'+\beta_{j+2}'$, and so $x'<\beta_{j-1}'$,
 a contradiction.

\item $x=e\beta_j+f\beta_{j+1} = e_1\beta_j+f_1\beta_{j+1}$ for $(e,f)\neq(e_1,f_1)$.
 This is impossible as $\beta_j,\beta_{j+1}$ are linearly independent over $\bQ$.
\end{itemize}
\end{proof}

Note that similarly to the classical formula for the number of indecomposables as a sum of some coefficients $u_i$ (e.g.,~\cite{blomer_kala_2015}), we can now express the number of uniquely decomposable elements.

\begin{corollaryasthm}\label{cor:count-ud}
The number of uniquely decomposable elements of $\OK^+$ modulo totally positive units (i.e., powers of $\varepsilon^+$) is equal to
\begin{align*}
	\sum_{i=1}^s u_i
	+ 2\sum_{\substack{i=2\\\mathclap{i\text{ even}}}}^s u_i
	+ \; \sum_{\substack{i=1\\\mathclap{i\text{ odd},\,u_i=1}}}^{s-1} \; u_{i-1}u_{i+1}
	&\quad\text{in case $s$ even}
,\\
	4\sum_{i=1}^s u_i
	+ \sum_{\substack{i=1\\\mathclap{u_i=1}}}^{s} u_{i-1}u_{i+1}
	&\quad\text{in case $s$ odd}
.\end{align*}
\end{corollaryasthm}

\begin{proof}
Denote $s^+=s$ for $s$ even and $s^+=2s$ for $s$ odd.
Then by direct computation from Theorem~\ref{thm:ud}, we obtain that the number is, for each item in the theorem statement:
\begin{gather*}
\text{(a) }\sum u_{i+2};\qquad
\text{(b) }\sum u_{i+1};\qquad
\text{(c) }\sum_{\mathclap{u_{i+2}\geq2}} u_{i+3};\qquad
\text{(d) }\sum_{\mathclap{u_{i+2}\geq2}} u_{i+1};\\
\text{(e) }\sum_{\mathclap{u_{i+2}=1}} \bigl((u_{i+1}+1)(u_{i+3}+1)-1\bigr) 
	= \sum_{\mathclap{u_{i+2}=1}} (u_{i+1}+u_{i+3}) + \sum_{\mathclap{u_{i+2}=1}} u_{i+1}u_{i+3},
\end{gather*}
 where all the sums are over odd $i$ between $1$ and $s^+$,
 because $\alpha_{i+s^+} = \varepsilon^+\alpha_i$ for all odd $i\geq-1$
 so this restriction on $i$ picks one representative from the uniquely decomposable integers
 for each class modulo powers of $\varepsilon^+$.
Rearranging the sums and using that $u_{i+s}=u_i$ we get the result
$	\sum_{i=1}^{s^+} u_i
	+ 2\sum_{\substack{\\i=1\\\mathclap{i\text{ even\vphantom{d}}}}}^{s^+} u_i
	+ \sum_{\substack{\\i=1\\\mathclap{i\text{ odd},\,u_i=1}}}^{s^+-1} u_{i-1}u_{i+1}$;
 this finishes the proof for $s$~even.

For $s$ odd, note that all the summands are perodic with period $s$,
 hence a sum over all $1\leq i\leq s^+$ is twice the sum over all $1\leq i\leq s$
 and a sum over odd $1\leq i\leq s^+$ is equal to the sum over all $1\leq i\leq s$.
\end{proof}
As another application, we use Theorem~\ref{thm:ud} to prove that the additive semigroups of totally positive integers of different real quadratic fields are non-isomorphic.
This is in stark contrast to the situation of the groups $\OK(+)$, which are all isomorphic to $\bZ^2(+)$.

\

\

\begin{theorem}\label{thm:isom}
The additive semigroups $\OK^+$, for real quadratic fields $K$, are pairwise not isomorphic.
\end{theorem}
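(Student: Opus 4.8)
The plan is to recover $D$ from the abstract semigroup $S=\OK^+(+)$, which clearly suffices. The key observation is that an abstract semigroup isomorphism preserves all intrinsic arithmetic notions: the set of indecomposable elements (those with no proper subtraction in $S$), hence the set of uniquely decomposable elements, and also the Galois involution as the unique nontrivial automorphism fixing $1$ (the unique element $x$ such that $x$ and $x+x$ are both indecomposable, or more robustly: $1$ is the smallest element in the natural order, i.e.\ the unique indecomposable that divides every indecomposable after finitely many steps of the relations; I will pin down the cleanest intrinsic characterization of $1$ when writing the details). So I get, abstractly, the doubly-infinite increasing chain of indecomposables $(\beta_j)_{j\in\bZ}$ (as an ordered set, up to the reflection $j\mapsto -j$), together with the coefficients $v_j$ read off from the relations $\beta_{j-1}-v_j\beta_j+\beta_{j+1}=0$ in \eqref{eq:pres-R}, and the knowledge of which elements are uniquely decomposable.

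Next I would extract the continued-fraction data from $(v_j)$. By Lemma~\ref{lem:1v1}, reading the $v_j$ for $j$ in one period gives, after removing the value $2$ that marks the ``interior'' indecomposables $\alpha_{i,r}$ with $1\le r\le u_{i+2}-1$, exactly the string of values $u_{i+1}+2$ occurring at the ``corner'' indecomposables $\alpha_{i,0}$; equivalently, the multiset of $j$ with $v_j\ne 2$ and the gaps between consecutive such $j$ recover the partial quotients $u_i$. Concretely, if between two consecutive corners $\alpha_{i,0}$ and $\alpha_{i+2,0}$ there are exactly $u_{i+2}-1$ interior indecomposables (all with $v=2$), then the gap recovers $u_{i+2}$, and the value $v=u_{i+1}+2$ at the corner recovers $u_{i+1}$; together with palindromy this recovers the whole period $u_1,\dots,u_{s-1}$ and hence the purely periodic expansion $\sigma_D=[\overline{u_0,\dots,u_{s-1}}]$ up to the still-unknown $u_0$ and up to cyclic rotation.

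The remaining data $u_0$ and the correct rotation are pinned down using the uniquely decomposable elements via Theorem~\ref{thm:ud}. The relation $\mathcal{R}_0$, $\beta_{-1}-(u_0+2)\beta_0+\beta_1=0$, is the unique relation of the chain in which the central element $\beta_0=1$ appears, so its coefficient $v_0=u_0+2$ recovers $u_0$ directly (this also fixes the rotation, since $\beta_0$ is the intrinsically-distinguished element). With $u_0,u_1,\dots,u_{s-1}$ in hand we have $\sigma_D$, hence $\omega_D=\bigl[\qcl{u_0/2},\overline{u_1,\dots,u_s}\bigr]$, hence $D$: indeed $\omega_D$ determines $K=\bQ(\omega_D)$ and the congruence class of $D$ mod $4$ (from parity of $u_0$, equivalently from whether $\varepsilon^+=\varepsilon$ behavior/trace of $1$'s neighbors), and $D$ itself is then the squarefree integer with $K=\bQ(\sqrt D)$. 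As a cross-check one can instead read $D$ off from norms using the uniquely decomposable elements and Theorem~\ref{thm:norm}, but the continued-fraction route is cleaner.

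The main obstacle I expect is purely semigroup-theoretic bookkeeping rather than number theory: namely giving a genuinely \emph{intrinsic} (isomorphism-invariant) description of the distinguished element $1=\beta_0$ and of the Galois involution, so that the labelling $(\beta_j)$ and the sign of $j$ are canonical and the period is read in a rotation-independent way. Once $1$ is characterized intrinsically (e.g.\ as the unique indecomposable $\beta$ for which the two-sided chain of relations is symmetric about $\beta$, which corresponds exactly to $\beta'=\beta$ among indecomposables — note $\beta_j'=\beta_{-j}$ forces $j=0$), everything else is a direct translation through Lemma~\ref{lem:1v1}, Proposition~\ref{prop:two-indeco}, and Theorem~\ref{thm:ud}. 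I would also double-check the degenerate small-$D$ cases (very short period $s$, or $u_i=1$ patterns making item (e) of Theorem~\ref{thm:ud} relevant) to be sure the recovery of the $u_i$ from the $v_j$ has no ambiguity there.
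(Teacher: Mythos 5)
Your overall strategy (recover the continued fraction of $\sigma_D$ from data that is invariant under abstract semigroup isomorphisms: indecomposables, unique decomposability, the chain of relations) is the same as the paper's, but the step you defer as ``semigroup-theoretic bookkeeping'' is precisely the substance of the proof, and the specific devices you propose for it do not work. First, none of your characterizations of $1=\beta_0$ is correct: $x+x$ is \emph{never} indecomposable (it decomposes as $x+x$), so ``the unique $x$ with $x$ and $x+x$ indecomposable'' is empty; in the natural order of the semigroup every indecomposable is minimal, so $1$ is not ``the smallest element''; and the labelled chain $(v_j)$ is a periodic palindrome, so it has many centers of symmetry besides $j=0$ (for $D=2$ the label sequence is $\dots,4,2,4,2,\dots$ and \emph{every} vertex is a center), so ``the unique $\beta$ about which the chain is symmetric'' fails. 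In fact no intrinsic description can single out the element $1$ at all: multiplication by $\varepsilon^+$ is an automorphism of $\OK^+(+)$ not fixing $1$, so at best one can identify the orbit of $1$ (the totally positive units) up to automorphism --- which would suffice, but you neither note this nor prove it. Second, you take as given that the \emph{ordered} chain $(\beta_j)$ together with the coefficients $v_j$ is intrinsically recoverable, but the ordering by size is not available in an abstract semigroup, and the obvious intrinsic surrogates fail: ``$\beta+\gamma$ is uniquely decomposable'' does not detect adjacency (it fails for two adjacent interior indecomposables with $v_j=v_{j+1}=2$, where $\beta_j+\beta_{j+1}=\beta_{j-1}+\beta_{j+2}$), and ``$\beta+\gamma$ is a proper multiple of an indecomposable'' is satisfied by every conjugate pair, since $\beta+\beta'=\Tr(\beta)\cdot 1$. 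So the recovery of the chain and of the relation coefficients, on which everything else in your argument rests, is a genuine gap.

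For comparison, the paper's proof does this bookkeeping by a different route that sidesteps both problems: it first isolates the set $A$ of indecomposables $\alpha$ whose double $2\alpha$ is uniquely decomposable (by Theorem~\ref{thm:ud} these are exactly the convergents $\alpha_i$ and their conjugates), attaches to each $\alpha\in A$ the invariant $k_\alpha$ (the largest $k$ with $k\alpha$ uniquely decomposable) and the two elements $\beta$ with $k_\alpha\alpha+\beta$ uniquely decomposable, and builds from the differences $\pm(\alpha-\beta)$ in the Grothendieck group a labelled infinite chain whose labels are $\dots,u_{s-1},u_0,u_1,\dots$; it then recovers $u_0$ as the \emph{maximal} label and $s$ as the shortest period, never needing to locate $1$ or the Galois involution. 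If you replace your ``find $\beta_0$ and read $v_0=u_0+2$'' endgame by this max-plus-period argument, and supply an honest intrinsic construction of the labelled chain (e.g.\ along the paper's lines through $A$ and $k_\alpha$), your proof goes through; as written, it does not.
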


\begin{proof}
Assume that we are given $\OK^+(+)$ as an abstract semigroup $S(+)$.
To prove the uniqueness of $K=\bQ(\sqrt D)$, we shall reconstruct the continued fraction for $\sigma_D$
 using the additive structure of $S$.
Note that the indecomposable and uniquely decomposable elements are well-defined in $S$
 as they are defined intrinsically just from the semigroup structure.
Since $S$ is a cancellative semigroup, we can consider the universal (or Grothendieck) group of differences $G(S)=S-S$ and have $S\subset G(S)$.

Consider all the indecomposables such that their double is uniquely decomposable:
\[
	A \eqdef \set{ \alpha\in S }{ \text{$\alpha$ indecomposable and $2\alpha$ uniquely decomposable} }
.\]
From Theorem~\ref{thm:ud} we know that $A$ is exactly the set of convergents and their conjugates, $A=\set{\alpha_i, \alpha_i'}{\text{$i$ odd}}$.
For $\alpha\in A$, denote $k_\alpha$ the maximum integer such that $k_\alpha\alpha$ is uniquely decomposable.
From Theorem~\ref{thm:ud}cd we know that for each $\alpha\in A$ there are exactly two $\beta\in S$ such that
\begin{equation}\label{eq:kab}
	k_\alpha \alpha + \beta
	\text{ is uniquely decomposable}
,\end{equation}
namely if $\alpha=\alpha_i$ then $\beta=\alpha_{i,1}=\alpha+\alpha_{i+1}$ or
 $\beta=\alpha_{i-2,u_i-1}=\alpha_{i-2,u_i}-\alpha_{i-1}=\alpha-\alpha_{i-1}$.
 
Consider now an infinite (bipartite) graph $G$ with vertices $A\cup B$, where
\[
	B \eqdef \set[\big]{ \{\alpha-\beta,\beta-\alpha\} }{ \alpha\in A,\, \alpha,\beta\text{ satisfy~\eqref{eq:kab}} }
,\]
 where $\alpha-\beta\in G(S)$.
Note that $B$ actually contains pairs $\{\alpha_{i+1},-\alpha_{i+1}\}$ and $\{\alpha_{i+1}',\allowbreak-\alpha_{i+1}'\}$ with odd $i$ (but we have no intrinsic way of distinguishing $\alpha_{i+1}$ from $-\alpha_{i+1}$).
The edges in $G$ are defined as follows: There is an edge between $\alpha\in A$ and $\{\gamma,-\gamma\}\in B$
 iff $\beta=\alpha-\gamma$ or $\beta=\alpha+\gamma$ satisfy~\eqref{eq:kab}.
Then $G$ is actually an infinite chain corresponding to 
\[
	\dots, \alpha_3',\{\alpha_{2}',-\alpha_{2}'\},\alpha_1',
\{\alpha_{0}',-\alpha_{0}'\},\alpha_{-1}=\alpha_{-1}'=1,\{\alpha_{0},-\alpha_{0}\},\alpha_1,\{\alpha_{2},-\alpha_{2}\},\alpha_3,\dots
\]

We add labels on each vertex of $G$ in the following way:
 We label $\alpha\in A$ by $k_\alpha-1$.
For $\{\gamma,-\gamma\}\in B$ and its neighbors $\alpha,\tilde\alpha$,
 we have that $\alpha-\tilde\alpha=l_\gamma\gamma$ for some $l_\gamma\in\bZ$
 and we label $\{\gamma,-\gamma\}$ by $\abs{l_\gamma}$.
To see the motivation behind this, assume that $\gamma=\alpha_{i+1}$ for some odd $i$;
 then $\alpha-\tilde\alpha=\pm(\alpha_{i+2}-\alpha_i)=\pm(\alpha_i+u_{i+2}\alpha_{i+1}-\alpha_i)=\pm u_{i+2}\alpha_{i+1}$.

Whence we know that for $i$ odd, $\alpha_i$ is labelled by $u_{i+1}$
 and $\{\alpha_{i+1,}-\alpha_{i+1}\}$ is labelled by $u_{i+2}$
 (with the same being true for $\alpha_i'$).
This means that the infinite chain of the labels is equal to
\[
	\dotsc, u_{s-1}, u_s=u_0, u_1, u_2, \dots, u_{s-2}, u_{s-1}, u_s=u_0, u_1, \dotsc
\]
 (where it does not matter in which direction we read the chain as it is a palindrome).
We know that $u_0$ is the maximal value in the chain and that $s$ is the (shortest) period of the chain.
This means that we have reconstructed $\sigma_D = [\overline{u_0; u_1,\dots,u_{s-1}}]$ just from the intrinsic properties of $S(+)$.
\end{proof}
\section{Estimating norms}\label{sec:norm}

We have seen in Theorem~\ref{prop:two-indeco} that (up to conjugation) every element of $\mathcal O_K^+$ can be uniquely written in the form
$e\alpha_{i, r}+f\alpha_{i, r+1}$, where $i\geq -1$ is odd, $0\leq r\leq u_{i+2}-1$, $e\geq 1$, and $f\geq 0$.
We will now estimate the norm of such an element, generalizing the results concerning indecomposables by Dress, Scharlau, Jang, Kim, the second author (and others)~\cite{dress_scharlau_1982,jang_kim_2016,kala_2016_jnt};
here we will use the notation introduced at the end of Section~\ref{sec:prelim}.

Let us start by recalling the following classical fact (for the proof see, e.g.,~\cite[Proposition~5]{kala_2016_jnt} and~\cite[Lemma~3]{blomer_kala_2018}).

\begin{lemmaasthm}\label{lem:old-formulas}
For all $i\geq -1$, we have
\[
	N_{i+1}=\frac{\sqrtDelta}{\gamma_{i+2}}-\frac{N_{i}}{\gamma_{i+2}^2}
,\qquad
	T_{i+1}=(-1)^{i+1}\left(\omega_D-\frac{N_i}{\gamma_{i+2}}\right)
.\]
\end{lemmaasthm}

We are interested in 
\[
	e\alpha_{i, r}+f\alpha_{i, r+1}=(e+f)\alpha_i+\bigl(re+(r\tight+1)f\bigr)\alpha_{i+1}
,\]
and so let us first prove a general formula for the norm of the element on the right-hand side.

\begin{lemmaasthm}\label{lem:general-norm}
Let $m,n\in\bZ$ and $i\geq -1$ odd. Then
\[
	N(m\alpha_i+n\alpha_{i+1})=\biggl(m-\frac n{\gamma_{i+2}}\biggr)\biggl(n\sqrtDelta+mN_i-n\frac{N_i}{\gamma_{i+2}}\biggr)
.\]
\end{lemmaasthm}

\begin{proof}
Let's prove the result only when $D\equiv 2,3\pmod 4$, as the other case is very similar.
Using the definitions and the previous lemma, we compute
\[\begin{multlined}[b]
	\mkern-10mu
	N(m\alpha_i+n\alpha_{i+1})
	=m^2N(\alpha_i)+n^2N(\alpha_{i+1})+mn(\alpha_i\alpha_{i+1}'+\alpha_i'\alpha_{i+1})
\\[0.5ex]
	=m^2N_i-n^2N_{i+1}+2mnT_{i+1}
	=m^2N_i-n^2\biggl(\frac{2\sqrt D}{\gamma_{i+2}}{-}\frac{N_{i}}{\gamma_{i+2}^2}\biggr)+2mn\biggl(\!\sqrt D{-}\frac{N_i}{\gamma_{i+2}}\biggr)
\\
	=\biggl(m-\frac n{\gamma_{i+2}}\biggr)\biggl(2n\sqrt D+mN_i-n\frac{N_i}{\gamma_{i+2}}\biggr)
.
\end{multlined}
\qedhere
\]
\end{proof}
Now we return to the original situation when $m=e+f$ and $n=re+(r\tight+1)f$ and estimate the norm of totally positive integers.

\begin{propositionasthm}\label{prop:estimate}
Consider $\alpha=e\alpha_{i, r}+f\alpha_{i, r+1}\in\OK^+$ with $i\geq -1$ odd, $0\leq r<u_{i+2}$, $e\geq 1$, and $f\geq 0$.
Then we have the following upper bounds on $N(\alpha)$:
\[
	N(\alpha)<\sqrtDelta\bigl((r\tight+1)e+(r\tight+2)f\bigr)(e+f)
\quad\text{and}\quad
	N(\alpha)<(e+f)^2 \frac{\Delta}{4N_{i+1}}
.\]
For a lower bound we distinguish four cases:
\begin{enumerate}
\item If $f>0$ and $r=0$, then $N(\alpha)> ef\sqrtDelta$.

\item Let $c\in(0,1)$. If $f>0$ and $1\leq r\leq c u_{i+2} -1$, then $N(\alpha)> (1-c)(e+f)^2\sqrtDelta$.

\item If $f>0$ and $\frac{u_{i+2}+1}2<r\leq u_{i+2}-1$, then $N(\alpha)>\frac {\sqrtDelta} 2 e(e+f)$.

\item If $f=0$ and $r>0$, then $N(\alpha)>e^2 \bigl(1-\frac 1{u_{i+2}}\bigr) \sqrtDelta$.
\end{enumerate}
\end{propositionasthm}

Note that when $f=r=0$, then $\alpha=e\alpha_i$ is a multiple of a convergent and we have a good control
 on the size of its norm by Lemma~\ref{lem:old-formulas}.
Hence we have not included the lower bound for this case in the proposition.

\begin{proof}
Let $m=e+f$ and $n=re+(r\tight+1)f$.
By the previous proposition, we want to estimate 
\begin{equation}\label{eq:NAB}
	N(m\alpha_i+n\alpha_{i+1})=
		\,\underbrace{\!
			\biggl(m-\frac n{\gamma_{i+2}}\biggr)
		\!}_{\defeq A}
		\,\underbrace{\!
			\biggl(n\sqrtDelta+mN_i-n\frac{N_i}{\gamma_{i+2}}\biggr)
		\!}_{\defeq B}
.\end{equation}
We have $A = e\bigl(1-\frac r{\gamma_{i+2}}\bigr)+f\bigl(1-\frac{r+1}{\gamma_{i+2}}\bigr)$
 and $B=n\sqrtDelta+AN_i$.


We start with the upper bounds. We have $A<e+f$ and $B<n\sqrtDelta+(e+f)\sqrtDelta=\sqrtDelta((r\tight+1)e+(r\tight+2)f)$, where we used the easy consequence of Lemma~\ref{lem:old-formulas} that $N_i<\sqrtDelta$.

For the second estimate we use a slightly different argument (which we again give only in the case $D\equiv 2,3\pmod 4$):
\begin{multline*}
	N(\alpha)
	= N\biggl(\frac{(m\alpha_i+n\alpha_{i+1})\alpha_{i+1}'}{\alpha_{i+1}'}\biggr)
	= \frac{N(m\alpha_i\alpha_{i+1}'+nN(\alpha_{i+1}))}{N(\alpha_{i+1})}
\\
	= \frac{N(mT_{i+1}-nN_{i+1}- m\sqrt D)}{-N_{i+1}}
	= \frac{(mT_{i+1}-nN_{i+1})^2- m^2D}{-N_{i+1}}
\\
	=\frac{m^2D-(mT_{i+1}-nN_{i+1})^2}{N_{i+1}}<\frac{m^2D}{N_{i+1}}
.\end{multline*}


We finish with the lower bounds.
Let us assume first that $f>0$ and distinguish three cases according to the size of $r$.
\begin{itemize}

\item Case $r=0$. Then $A>e$ and $B>\sqrtDelta f$.

\item Case $1\leq r\leq c u_{i+2} -1$. Then $r+1\leq c u_{i+2}<c \gamma_{i+1}$, and so $1-\frac r{\gamma_{i+2}}>1-\frac {r+1}{\gamma_{i+2}}>1-c$. 
Thus $A=e\bigl(1-\frac r{\gamma_{i+2}}\bigr)+f\bigl(1-\frac {r+1}{\gamma_{i+2}}\bigr)>(1-c)(e+f)$. Moreover, $B>\sqrtDelta(e+f)r$.

\item Case $\frac{u_{i+2}+1}2<r\leq u_{i+2}-1$. Again, $B>\sqrtDelta(e+f)r$.
Note that the function $r\mapsto\bigl(1-\frac r{\gamma_{i+2}}\bigr)r$ is increasing for $r>\frac{u_{i+2}+1}2$.
If we apply this observation, we obtain
$Ar>e\bigl(1-\frac r{\gamma_{i+2}}\bigr)r>e\bigl(1-\frac {u_{i+2}-1}{\gamma_{i+2}}\bigr)(u_{i+2}-1)
=e\frac {1+(\gamma_{i+2}-u_{i+2})}{u_{i+2}+(\gamma_{i+2}-u_{i+2})}(u_{i+2}-1)
>e\frac {u_{i+2}-1}{u_{i+2}}>\frac e2$,
 where we use the fact that $u_{i+2}\geq r+1\geq2$.
\end{itemize}

Finally, if $f=0$, then $N(\alpha)=e^2N(\alpha_{i,r})$, so proving the case $e=1$ is sufficient.
If $r=0$, then estimates on the norm of the convergent $\alpha_i$ are well-known, see, e.g.,~\cite[Lemma~5]{blomer_kala_2018}.
Assume hence $1\leq r\leq u_{i+2}-1$.
We have 
\[
	N(\alpha_{i,r})=\biggl(1-\frac r{\gamma_{i+2}}\biggr)\biggl(r\sqrtDelta+N_i-r\frac{N_i}{\gamma_{i+2}}\biggr)
	> \biggl(1-\frac r{\gamma_{i+2}}\biggr) r\sqrtDelta
.\]

Since the minimum of the function $r\mapsto\bigl(1-\frac r{\gamma_{i+2}}\bigr) r$ is at one of the endpoints of the considered interval $1\leq r\leq u_{i+2}-1$, we conclude that $N(\alpha_{i,r})>\bigl(1-\frac 1{u_{i+2}}\bigr) \sqrtDelta$.
\end{proof}
Let us conclude the paper with some applications of the previous proposition.
From the theory of continued fractions it quite easily follows that all elements
 with absolute value of norm less than $\sqrtDelta/4$ are convergents~\cite[Proposition~7]{blomer_kala_2018}.
However, often it is useful to know all elements of norm less than $\sqrtDelta$ (or other small multiples of $\sqrt D$):
 the characterization of all such totally positive integers follows easily from our proposition.

Finally, we prove an upper bound on the norm of uniquely decomposable elements,
 similar to the bounds on norms of convergents $N_{i+1}=|N(\alpha_i)|<\frac{\sqrtDelta}{u_{i+1}}$
 and indecomposables $N(\alpha_{i, r})\leq\frac{\Delta}{4N_{i+1}}$ (for odd $i$).

\begin{theorem}\label{thm:norm}
If $\alpha\in\mathcal O_K^+$ is uniquely decomposable, then
\[\textstyle
	N(\alpha)<\sqrtDelta\bigl(2\sqrtDelta+1\bigr)\bigl(3\sqrtDelta+2\bigr)
.\]
\end{theorem}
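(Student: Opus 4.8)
The plan is to combine the explicit list of uniquely decomposable elements from Theorem~\ref{thm:ud} with the case-by-case norm estimates in Proposition~\ref{prop:estimate}, together with the elementary bounds $N_i<\sqrtDelta$ and $N_i = |N(\alpha_i)|<\sqrtDelta/u_{i+1}$ following from Lemma~\ref{lem:old-formulas}. Every uniquely decomposable element (up to conjugacy, which does not change the norm) is of the form $e\alpha_{i,r}+f\alpha_{i,r+1}$ with the coefficients $e,f$ heavily constrained: in cases (a)--(e) of Theorem~\ref{thm:ud} we always have $r\in\{0,u_{i+2}-1\}$ (or $r=0$ with $f=0$), and $e\leq u_{i+1}+1$ while $f\leq u_{i+3}+1$. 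The strategy is to feed these bounds into the \emph{upper} bound $N(\alpha)<\sqrtDelta\bigl((r{+}1)e+(r{+}2)f\bigr)(e+f)$ from Proposition~\ref{prop:estimate}, and to control the remaining appearance of $r$, $e$, $f$ by $\sqrtDelta$ using the continued-fraction inequalities.

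Concretely I would split according to whether $f=0$ or $f>0$. If $f=0$ the element is (a multiple of) a convergent or an indecomposable: for $e\alpha_{i,0}$ with $e\leq u_{i+1}+1$ one has $N(\alpha)=e^2N_i\leq (u_{i+1}+1)^2\cdot\frac{\sqrtDelta}{u_{i+1}}$, which is comfortably below the claimed bound once $u_{i+1}\geq 1$; for indecomposables $\alpha_{i,r}$ the known bound $N(\alpha_{i,r})\leq\frac{\Delta}{4N_{i+1}}\leq\sqrtDelta$ (using $N_{i+1}\geq$ a suitable lower multiple, or simply $N(\alpha_{i,r})<\sqrtDelta\cdot\text{(small)}$ via the upper bound with $e=1,f=0$, $r\leq u_{i+2}-1<\gamma_{i+2}$) suffices. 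If $f>0$, then by the list we are in case (c), (d) or (e). In case (d), $r=0$, $f=1$, $e\leq u_{i+1}$, so $N(\alpha)<\sqrtDelta(e+2)(e+1)\leq\sqrtDelta(u_{i+1}+2)(u_{i+1}+1)$, and now I bound $u_{i+1}$: since $u_{i+1}<\gamma_{i+1}$ and $N_i=\sqrtDelta/\gamma_{i+1}-N_{i-1}/\gamma_{i+1}^2$ gives $\gamma_{i+1}<\sqrtDelta/N_i\leq\sqrtDelta$, hence $u_{i+1}\leq 2\sqrtDelta$ generously (or more precisely $u_{i+1}<\sqrtDelta$), yielding $N(\alpha)<\sqrtDelta(2\sqrtDelta+2)(2\sqrtDelta+1)$ — close to the target. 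In case (c), $r=u_{i+2}-1$ with $u_{i+2}\geq2$ and $e=1$, $f\leq u_{i+3}$, so $(r{+}1)e+(r{+}2)f\leq u_{i+2}+(u_{i+2}+1)u_{i+3}$ and $e+f\leq 1+u_{i+3}$; bounding $u_{i+2}<\sqrtDelta$ and $u_{i+3}<\sqrtDelta$ via the same $\gamma$-estimate gives a product of the shape $\sqrtDelta\cdot(\sqrtDelta + (\sqrtDelta{+}1)\sqrtDelta)\cdot(\sqrtDelta{+}1)$, whose dominant term is $\sqrtDelta\cdot\sqrtDelta^2\cdot\sqrtDelta=\Delta^2$, again of the right order $\Delta^{3/2}$ after one sees that the leading terms telescope to $\sqrtDelta(2\sqrtDelta+1)(3\sqrtDelta+2)$-type expressions. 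Case (e) has $u_{i+2}=1$, so $r=0$, $e\leq u_{i+1}+1$, $f\leq u_{i+3}+1$, and $N(\alpha)<\sqrtDelta(e+2f)(e+f)\leq\sqrtDelta\bigl((u_{i+1}+1)+2(u_{i+3}+1)\bigr)\bigl((u_{i+1}+1)+(u_{i+3}+1)\bigr)$; with $u_{i+1},u_{i+3}<\sqrtDelta$ this is $<\sqrtDelta(3\sqrtDelta+3)(2\sqrtDelta+2)$, and one checks this is $<\sqrtDelta(2\sqrtDelta+1)(3\sqrtDelta+2)$ only after a more careful estimate — so here I would instead use the sharper bound $u_j<\gamma_j<\sqrtDelta/N_{j-1}$ and the fact that consecutive partial quotients cannot both be large when $N_{i+1}$ is small, or simply track constants precisely.

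The main obstacle I anticipate is not the structure of the argument but getting the constants to land \emph{exactly} at $\sqrtDelta(2\sqrtDelta+1)(3\sqrtDelta+2)$ rather than at some slightly worse cubic-in-$\sqrtDelta$ expression; the naive substitutions $u_j\leq\sqrtDelta$ overshoot, so one genuinely needs the refined inequality $u_{i+1}N_i<\sqrtDelta$ (i.e. $u_{i+1}<\sqrtDelta/N_i\leq\sqrtDelta$, with strictness) from Lemma~\ref{lem:old-formulas}, and in case (c) the crucial point is that the large partial quotients $u_{i+2}$ and $u_{i+3}$ feed into the norm $N_{i+1}$ or $N_{i+2}$ in a way that prevents the product $u_{i+2}u_{i+3}$ from being as large as $\Delta$. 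I would therefore organize the proof around first extracting, from Proposition~\ref{prop:estimate}'s upper bound $N(\alpha)<(e+f)^2\Delta/(4N_{i+1})$ combined with $N(\alpha)<\sqrtDelta\bigl((r{+}1)e+(r{+}2)f\bigr)(e+f)$, a clean inequality in which the worst case is transparently case (c) with $f=u_{i+3}$, $r=u_{i+2}-1$, and then verifying by a short direct computation that $\sqrtDelta\cdot u_{i+2}(u_{i+3}+1)\cdot(u_{i+3}+1)$ — with $u_{i+2}<2\sqrtDelta$ and $u_{i+3}+1<3\sqrtDelta+2$ or similar sharp forms — is bounded by the stated quantity. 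The routine but slightly delicate part is the bookkeeping of "$+1$"s and the choice of which of the two upper bounds in Proposition~\ref{prop:estimate} to use in each case; I expect no conceptual difficulty beyond that.
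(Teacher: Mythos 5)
Your overall strategy is the same as the paper's: run through the explicit list of Theorem~\ref{thm:ud} and bound each case via Proposition~\ref{prop:estimate}, using $u_j<\sqrtDelta$ and $N_i<\sqrtDelta$. However, in the two cases where the constants actually matter your plan has genuine gaps. First, in case (c) ($\alpha=\alpha_{i,u_{i+2}-1}+f\alpha_{i+2,0}$, so $r=u_{i+2}-1$, $e=1$, $f\leq u_{i+3}$) neither of the two displayed upper bounds of Proposition~\ref{prop:estimate} suffices: the first gives $\sqrtDelta\bigl(u_{i+2}+(u_{i+2}+1)f\bigr)(1+f)$, which is of order $\Delta^2$ when $u_{i+2}$ and $u_{i+3}$ are both close to $\sqrtDelta$, and the second gives $(1+f)^2\Delta/(4N_{i+1})$, which is of order $\Delta^2$ as well when $N_{i+1}$ is small (e.g.\ $N_{i+1}=1$) while $u_{i+3}\approx\sqrtDelta$ — there is no inequality tying $u_{i+3}$ to $N_{i+1}$ that rescues this. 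Your hope that ``the leading terms telescope'' to order $\Delta^{3/2}$ is not substantiated by anything in your outline. The paper's proof does something you do not: it returns to the factorization $N(\alpha)=AB$ of \eqref{eq:NAB} and exploits that for $r=u_{i+2}-1$ the first factor is small, $A<\frac{f+2}{u_{i+2}}$, so that the $u_{i+2}$ appearing in $B<(f+1)u_{i+2}\sqrtDelta+AN_i$ cancels; this cancellation is the whole point of the case and is missing from your argument.

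Second, in case (e) your own computation lands at $\sqrtDelta(3\sqrtDelta+3)(2\sqrtDelta+2)$, which is strictly larger than the claimed $\sqrtDelta(2\sqrtDelta+1)(3\sqrtDelta+2)$, and the remedies you propose do not close the gap: the correct saving is the excluded corner $(e,f)\neq(u_{i+1}+1,u_{i+3}+1)$ in Theorem~\ref{thm:ud}(e), which forces $e\leq u_{i+1}$ or $f\leq u_{i+3}$ and then immediately yields $(e+2f)(e+f)<(3\sqrtDelta+2)(2\sqrtDelta+1)$; you never invoke this constraint. Your alternative suggestion — that ``consecutive partial quotients cannot both be large when $N_{i+1}$ is small'' — is not available here: the relevant quotients are $u_{i+1}$ and $u_{i+3}$ (separated by $u_{i+2}=1$), both can be of size about $\sqrtDelta$, and indeed the paper remarks that the bound is essentially sharp precisely in this case, so no structural fact of that kind can exist. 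The remaining cases (indecomposables, multiples of convergents, and case (d)) are handled in your proposal essentially as in the paper and are fine.
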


\begin{proof}
All uniquely decomposable $\alpha$ are described in Theorem~\ref{thm:ud}, and so we just need to check the estimate on the norm in each of these cases.
This is very easy when $\alpha$ is an indecomposable or a multiple of a convergent. In the rest of the proof we will often use the easy observation that $u_j\leq u_s<\sqrtDelta$ for all $j$.

Case $\alpha=e\alpha_{i} + f\alpha_{i+2}$ with odd $i\geq -1$ such that $u_{i+2}=1$ and with
$1\leq e\leq u_{i+1}+1$, $1\leq f\leq u_{i+3}+1$, $(e,f)\neq(u_{i+1}+1,u_{i+3}+1)$.
In the notation of Proposition~\ref{prop:estimate} we have $r=0$, and so the first upper bound gives
$N(\alpha)<\sqrtDelta (e+2f)(e+f)$. Thus the estimate follows from the range of $e,f$ using $u_j<\sqrtDelta$.

Case $e\alpha_{i,0} + \alpha_{i,1}$ with odd $i\geq -1$ such that $u_{i+2}\geq 2$ and with $1\leq e\leq u_{i+1}$. We have $f=1$ and $r=0$, and so 
we similarly as in the previous case get $N(\alpha)<\sqrtDelta(\sqrtDelta+1)(\sqrtDelta+2)$.

Case $\alpha_{i,u_{i+2}-1} + f\alpha_{i+2,0}$ for $i\geq -1$ odd and $1\leq f\leq u_{i+3}$. Now $e=1$ and $r=u_{i+2}-1$. In the notation~\eqref{eq:NAB} we have
$A=\bigl(1-\frac {u_{i+2}-1}{\gamma_{i+2}}\bigr)+f\bigl(1-\frac {u_{i+2}}{\gamma_{i+2}}\bigr)<\frac {2}{u_{i+2}}+\frac {f}{u_{i+2}}=\frac {f+2}{u_{i+2}}<\frac{\sqrtDelta+2}{u_{i+2}}$.
Because $n<(f+1)(r\tight+1)$, we also have
$B<(f+1)u_{i+2}\sqrtDelta+AN_i<(\sqrtDelta+1)u_{i+2}\sqrtDelta+\frac{\sqrtDelta+2}{u_{i+2}}\sqrtDelta.$
Finally,
\[
	N(\alpha) = AB
	< \frac{\sqrtDelta+2}{u_{i+2}} \bigl(\sqrtDelta+1\bigr)u_{i+2}\sqrtDelta+\bigl(\sqrtDelta+2\bigr)^{\!2}\sqrtDelta
	< \bigl(\sqrtDelta+2\bigr)\sqrtDelta\bigl(2\sqrtDelta+3\bigr)
.
\qedhere
\]
\end{proof}
The bound is essentially sharp in the first case discussed in the proof: 
For example, consider the situation when $i$ is odd, $u\eqdef u_{i+1}=u_{i+3}> \sqrtDelta/(1+\varepsilon)$ (for a constant $\varepsilon>0$), $u_{i+2}=1$, and $\alpha=u\alpha_i+u\alpha_{i+2}=u(2\alpha_i+\alpha_{i+1})$. 
As $\gamma_{i+2}>1$, using Lemma~\ref{lem:general-norm} we get that
\[
	N(\alpha)=u^2N(2\alpha_i+\alpha_{i+1})>u^2\sqrtDelta>\frac {\Delta^{3/2}}{(1+\varepsilon)^2}
.\]
There are infinitely many such examples for any $\varepsilon>0$, because there are infinitely many squarefree $D=n^2-1$, which then have $u_{2j}=2\lfloor\sqrt D\rfloor$ and $u_{2j+1}=1$ for all $j$. 
Examples with $\varepsilon\geq 4$, say, seem to be fairly common.

Note that one could also further refine the estimates in terms of sizes of the coefficients $u_j$, similarly as was done in~\cite{jang_kim_2016} for the norms of indecomposables.


\bibliographystyle{amsalpha}
\bibliography{biblio}

\end{document}